\documentclass[11pt]{article}
\usepackage{amsmath, amsthm, amssymb, eucal, setspace, mathrsfs}

\makeatletter
\renewcommand\section{\@startsection{section}{1}{\z@}%
 						{-3.5ex \@plus -1ex \@minus -.2ex}
						{2ex \@plus.2ex}
						{\large\bfseries}}
\renewcommand\subsection{\@ifstar
						{\setcounter{subsection}{\value{equation}}
					\@startsection{subsection}{2}{\z@}
                          {1.75ex \@plus.5ex \@minus.2ex}%
                           {-.4em}		
					\textit*}
					{\setcounter{subsection}{\value{equation}}
						\stepcounter{equation}
					\@startsection{subsection}{2}{\z@}
                          {1.75ex \@plus.5ex \@minus.2ex}%
                           {-.4em}		
					\textit}}
\def\@seccntformat#1{\@ifundefined{#1@cntformat}%
	{\csname the#1\endcsname\quad} 
	{\csname #1@cntformat\endcsname}} 
\def\section@cntformat{\thesection.~} 
\def\subsection@cntformat{(\thesubsection)\ }
\renewcommand*\l@section{\mdseries\small\@dottedtocline{1}{1.5em}{2em}}
\makeatother

\textwidth17cm
\textheight24cm
\topmargin-2cm
\evensidemargin-.5cm
\oddsidemargin-.5cm

\setcounter{tocdepth}{1}

\numberwithin{equation}{section}
\theoremstyle{plain}
\newtheorem{maintheorem}{Theorem}
\swapnumbers

\newtheorem{proposition}[equation]{Proposition}
\theoremstyle{definition}

\theoremstyle{remark}
\newtheorem{remark}[equation]{Remark}
\newtheorem{example}[equation]{Example}

\newcommand{\cE}{\mathscr{E}}
\newcommand{\cI}{\mathscr{I}}
\newcommand{\cK}{\mathscr{K}}
\newcommand{\cL}{\mathscr{L}}
\newcommand{\cO}{\mathscr{O}}
\newcommand{\cT}{\mathscr{T}}

\newcommand{\fra}{\mathfrak{a}}

\newcommand{\bC}{\mathbb{C}}
\newcommand{\bP}{\mathbb{P}}
\newcommand{\bZ}{\mathbb{Z}}
\newcommand{\vep}{\varepsilon}

\title{\textbf{Matrix Factorisation of Morse-Bott functions}}
\author{Constantin Teleman\footnote{UC Department of Mathematics, 970 Evans Hall, Berkeley, CA 94720}}
\date{July 26, 2018}
										
\begin{document}
\maketitle
\begin{abstract}
\noindent
For a function $W\in \bC[X]$ on a smooth algebraic variety $X$ with Morse-Bott critical locus 
$Y\subset X$, Kapustin, Rozansky and Saulina \cite{krs} suggest that the associated matrix 
factorisation category $\mathrm{MF}(X;W)$ should be equivalent to the differential 
graded category of $2$-periodic coherent complexes on $Y$ (with a topological twist from the 
normal bundle). I confirm their conjecture in the special case when the first neighbourhood 
of $Y$ in $X$ is split, and establish the corrected general statement. The answer involves 
the full Gerstenhaber structure on Hochschild cochains. This note was inspired 
by the failure of the conjecture, observed by Pomerleano and Preygel \cite{pp}, when $X$ is 
a general one-parameter deformation of a $K3$ surface $Y$. 

\noindent
\emph{Acknowledgements:} The author is most thankful to Dan Halpern-Leistner, Dan Pomerleano and 
Toly Preygel for bringing up the problem and for extended conversations on it, and to the referees 
for comments and suggestions. This work was partially supported by NSF grant DMS-1406056.
\end{abstract}

\section{Statements}

Associated to a regular function $W$ on a smooth algebraic variety $X$ is a differential 
super\footnote{We use the word \emph{grading} for a $\bZ$-grading and \emph{super} for a $\bZ/2$-grading. 
}-category $\mathrm{MF}(X;W)$ of \emph{matrix factorisations} \cite{or, or2, lp, p}. For quasi-projective 
$X$, objects in $\mathrm{MF}$ are represented by pairs of vector bundles and maps, $d_0:E_0 
\rightleftarrows E_1:d_1$, with $d_1\circ d_0 = W\cdot \mathrm{Id}_{E_0}$ and  $d_0\circ d_1 = 
W\cdot \mathrm{Id}_{E_1}$. For a choice of differential graded version of the category of coherent 
sheaves on $X$, morphisms between such pairs are represented by $2$-periodic complexes.  
Restricted away from the 
zero-fiber $W^{-1}(0)$, and (less obviously) away from the critical locus $Y$ of $W$ within 
$W^{-1}(0)$, the category is quasi-equivalent to $0$. For more general $X$, the global 
category is constructed by patching local objects via quasi-isomorphisms \cite{lp, or2}.

\subsection{Split Morse-Bott case.}
When $W$ has a single, Morse critical point $y$, $\mathrm{MF}(X;W)$ is quasi-equivalent to 
the category of Clifford super-modules based on $T_yX$, with the Hessian form $\partial^2W$ 
of $W$ (\cite{beh} and \S2 below). According to the parity of $\dim X$, Bott periodicity 
\cite{abs} reduces us to the category of super-vector spaces, or to super-modules over the rank 
one Clifford algebra $\mathrm{Cliff}(1)$. 

A tempting generalisation to Morse-Bott functions $W$ with critical locus $Y\in W^{-1}(0)$ 
would assert the equivalence of $\mathrm{MF}(X;W)$ with $\mathrm{DS}Coh(Y)$, 
the differential super-category  of $2$-periodic complexes of coherent $\cO_Y$-modules 
(or $\cO_Y\otimes \mathrm{Cliff}(1)$, if $Y$ has odd co-dimension). 
A topological correction to this guess arises from the first two normal Stiefel-Whitney classes 
of $Y$ in $X$. These classes assemble with the co-dimension parity into an element $\tau=
(\pm, w_1, w_2^c)$ of the super-Brauer group and define the $\tau$-twisted differential super-category 
$\mathrm{DS}Coh^\tau(Y)$  (see \S\ref{cliff} and Remark~\ref{grbr}). The first result is 

\begin{maintheorem} \label{splitcase}
If the first neighbourhood of $Y$ in $X$ is split, then $\mathrm{MF}(X;W)\equiv \mathrm{DS}Coh^\tau(Y)$.
\end{maintheorem}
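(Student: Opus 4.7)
The strategy is to formally trivialise $(X,W)$ along $Y$ to the linear-quadratic model $(N,\bar q)$, where $N\to Y$ is the normal bundle and $\bar q$ is the fibrewise extension of the Hessian $q\in\Gamma(Y,\mathrm{Sym}^2 N^\vee)$, and then to compute the matrix factorisations of this model directly via Clifford algebras on $Y$.

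Since $\mathrm{MF}(X;W)$ is supported on the critical locus, the category depends only on the formal neighbourhood $\hat X$ of $Y$ in $X$, which I work with throughout. The split first-neighbourhood hypothesis supplies a first-order isomorphism $\phi_1\colon X^{(1)}\to N^{(1)}$, under which $\phi_1^*\bar q\equiv W\pmod{\cI^3}$. I would then inductively construct $\phi_k\colon X^{(k)}\to N^{(k)}$ with $\phi_k^*\bar q\equiv W\pmod{\cI^{k+2}}$. Given $\phi_k$, the discrepancy $W-\phi_k^*\bar q$ lies in $\Gamma(Y,\mathrm{Sym}^{k+2}N^\vee)$, and one cancels it by adjusting $\phi_k$ along the normal fibres by $f_{k+1}\in\Gamma(Y,N^\vee\otimes\mathrm{Sym}^{k+1}N^\vee)$ satisfying the linear equation
\[
2\,q(n,f_{k+1})\;=\;W-\phi_k^*\bar q.
\]
This is globally solvable over $Y$ because the non-degeneracy of $q$ gives an isomorphism $q^\flat\colon N\to N^\vee$ and makes the symmetrisation map $N^\vee\otimes\mathrm{Sym}^{k+1}N^\vee\to\mathrm{Sym}^{k+2}N^\vee$ surjective. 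The resulting inverse limit $\phi$ is a formal isomorphism $(\hat X,W)\cong(\hat N,\bar q)$, and so yields a quasi-equivalence $\mathrm{MF}(\hat X,W)\simeq\mathrm{MF}(\hat N,\bar q)$.

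Next I would compute $\mathrm{MF}(\hat N,\bar q)$ using the global Koszul matrix factorisation $\mathcal{K}=\bigl(\Lambda^\bullet p^* N^\vee,\ \iota+q^\flat\bigr)$ on the total space of $N\xrightarrow{p} Y$. Its endomorphism sheaf along $Y$ is the Clifford algebra $\mathrm{Cliff}(N,q)$, and a fibrewise reduction to the Morse case recalled before the theorem shows that $\mathcal{K}$ is a generator. Hence $\mathrm{MF}(\hat N,\bar q)\simeq\mathrm{Cliff}(N,q)\text{-mod}$. As developed in \S\ref{cliff}, $\mathrm{Cliff}(N,q)$ is a sheaf of super-Azumaya algebras whose super-Brauer class is $\tau=(\pm,w_1,w_2^c)$, and Morita theory identifies its module category with $\mathrm{DS}Coh^\tau(Y)$, completing the chain of equivalences.

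I expect the principal technical obstacle to be the promotion of the inductive normal-form construction from a bare isomorphism of formal schemes-with-function to a genuine quasi-equivalence of the associated dg categories of matrix factorisations, including the correct behaviour on morphism complexes and the matching of generators. The split first-neighbourhood assumption is precisely what trivialises the order-one obstruction that starts the induction; its failure in the general setting is what forces the Gerstenhaber-theoretic correction developed in the remainder of the paper.
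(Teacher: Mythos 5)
The second half of your proposal---identifying $\mathrm{MF}(\hat N,\bar q)$ with the category of $\mathrm{Cliff}(N,q)$-modules and thence with $\mathrm{DS}Coh^\tau(Y)$ via the Koszul generator---matches the paper's Proposition~\ref{factorcliff} and is sound. The trouble is the first half. Your inductive scheme would produce a formal isomorphism $\phi\colon(\hat X,W)\to(\hat N,\bar q)$, but no such isomorphism exists in general, even under the split first-neighbourhood hypothesis, and this is not a technicality to be patched: it is precisely where the content of the theorem lies. Your adjustment $f_{k+1}$ is a purely vertical (normal-fibre) correction, chosen to repair the \emph{function} $W-\phi_k^*\bar q$ at the next order. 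It says nothing about whether $\phi_k$ extends to an isomorphism $X^{(k+1)}\to N^{(k+1)}$ of \emph{schemes} at all. That extension is obstructed by a class in $H^1\bigl(Y, T_Y\otimes\mathrm{Sym}^{k+1}N^\vee\bigr)$, and the split hypothesis only kills the $k=1$ obstruction. Even writing the ``discrepancy $W-\phi_k^*\bar q\in\Gamma(Y,\mathrm{Sym}^{k+2}N^\vee)$'' presupposes some lift of $\phi_k$ one order higher, which need not be an isomorphism.

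A concrete failure: take $Y$ a $K3$ surface, $N=\cO_Y$, and $X$ a one-parameter deformation with Kodaira--Spencer data $\varphi(t)=\sum_{n\ge 1}\varphi_n t^n$ as in Example~\ref{1par}. The split hypothesis is $[\varphi_1]=0$ in $H^1(Y,T_Y)$. Choose $\varphi_1=0$ but $\varphi_2\neq 0$ in $H^1(Y,T_Y)$: then $\hat X\not\cong\hat N$ as formal schemes, since the second-order deformation is non-trivial. Your inductive normalisation therefore cannot exist. Yet Theorem~1 is still true in this case. The paper's argument (Remark~\ref{splitting}) gauges $\varphi_1$ to zero; then the critical value $\Phi_c$ vanishes identically, and the theorem follows from Theorem~2, i.e., from the vanishing of the Hochschild-cochain \emph{deformation class} of the category rather than from any isomorphism of the underlying formal schemes-with-function. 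This is the essential idea you would need to import: the statement is an equivalence of deformations of a category, and this can hold even when the spaces differ. Attempting to prove it by trivialising the space is exactly the trap the paper is designed to circumvent.
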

\noindent
This was conjectured in \cite{krs}, \S B without the splitting assumption. However, while 
Theorem~\ref{splitcase} does generalise to the non-split case (Theorem~2 below), it fails as stated. 
This should not be so surprising: splitting the first neighbourhood is obstructed by a ``derived'' 
function linear along the normals to $Y$ (Remark~\ref{splitting} below), so that $Y$ is not quite 
the critical locus of $W$, in a derived sense. This is the source of the correction, and my aim 
in this note is to spell it out. 

\subsection{Presentation of $X$.} \label{xpresent}
We  describe a germ of $X$ near $Y$ as a deformation of (a germ within) its normal bundle 
$\nu:N\to Y$. In the holomorphic setting, this is controlled by 
Kodaira-Spencer-Kuranishi theory, and for convenience, I also encode it in the Dolbeault model. 
Algebraic information may be lost when $Y$ is not projective; but the relevant deformation theory 
is formal (we only need the formal neighbourhood of $Y$ in $X$ for the matrix factorisation category)  
and is thus controlled by the algebraic Hochschild complex. The reader may substitute algebraic models 
of this complex for Dolbeault forms; the statements and their proof can be adjusted, at the price of 
making the answer less explicit. 

There is no higher cohomology along $\nu$, so a full Kodaira-Spencer deformation 
datum $\varphi$ for the germ of $X$ lives in the space $\Omega^{(0,1)}(Y;\nu_*\cT(N))$ of Dolbeault 
differentials on $Y$ with coefficients in the push-down of the holomorphic tangent sheaf $\cT(N)$ 
of the total space $N$. Only the formal germ of $\varphi$ along $Y$ is relevant (to finite order, 
as will emerge from the answer). As we wish to fix the zero-section $Y$ and the normal bundle $N$, we 
choose a $\varphi$ which vanishes on $Y$, and whose vertical vector component vanishes quadratically there. 
(The normal derivative of the vertical component of $\varphi$ lives in $\Omega^{(0,1)}
(Y;N^\vee\otimes N)$, potentially deforming $N$.)  

Integrability of $\varphi$ and holomorphy of $W$ on $X$ are expressed using the Schouten bracket 
$\{\_\,,\_\}$:
\[
\bar{\partial}\varphi + \frac{1}{2} \{\varphi,\varphi\} =0\text{\ \ (Maurer-Cartan)}, \quad 
	\bar{\partial}_\varphi(W) := \bar{\partial}W + \{\varphi,W\} =0. 
\]
The special form of $\varphi$ forces $W$ to be $\bar{\partial}$-holomorphic along the fibres of $\nu$;   
but $\nu$ is not holomorphic for $\bar{\partial}_\varphi$, unless $\varphi$ is fully 
vertical. The normal Hessian form $\partial^2W$ at $Y$ is $\bar{\partial}$-holomorphic on $N$.

\subsection{Local simplification.} \label{localform}
Locally, over small Stein open subsets $U\subset Y$, we can choose holomorphic 
Morse coordinates $\{t_i\}$ on $X$, normal to $U$. Then,  $W =\frac{1}{2}\sum t^2_i$, while 
the vector component of $\varphi$ is purely horizontal with respect to the $t_i$. The Morse 
coordinates are also $\bar{\partial}$-holomorphic along each fibre of $\nu$. The algebraic analogue
holds formally near any affine $U$.

\subsection{The general Morse-Bott case.}
According to Kontsevich's formality theorem \cite{dtt}, formal deformations of 
$\mathrm{DS}Coh(Y)$ as a super-category are controlled by the Dolbeault-Hochschild 
complex $\Omega^{(0,\bullet)}\left(Y; \Lambda^\bullet \cT(Y)\right)$: to wit, the formal, even 
Maurer-Cartan solutions therein. (A minimalist account of this is given in \S2, with further 
discussion in the appendix.) 
Deformations of the twisted versions $\mathrm{DS}Coh^\tau(Y)$ are 
controlled in the same way, because $\tau$ is a locally constant twist. The Morse-Bott 
matrix factorisation question is answered by an even Maurer-Cartan element $\Phi_c\in 
\Omega^{(0,\bullet)}\left(Y; \Lambda^\bullet \cT (Y)\right)$ describing how $\mathrm{MF}(X;W)$ differs from  
$\mathrm{DS}Coh^\tau(Y)$. Let us find it. 

The shifted cotangent bundles $T^\vee Y[1]$, $T^\vee N[1]$ are (shifted) holomorphic 
symplectic manifolds, related by the Lagrangian correspondence $L(\nu):= \nu^*T^\vee Y[1]$ 
induced from the projection $N\xrightarrow{\:\nu\:} Y$. Extend all structure sheaves 
quasi-isomorphically by replacing $\cO_Y$ with its Dolbeault resolution (localised holomorphically 
along the fibres of $\nu$ and smoothly on the base $Y$). The DG algebras of functions for $T^\vee Y[1]$, 
$T^\vee N[1], L(\nu)$ become 
\begin{equation}\label{DSmanifolds}
\Omega^{(0,\bullet)}(Y; \Lambda^\bullet \cT(Y)), \quad \Omega^{(0,\bullet)}(Y; \nu_*\Lambda^\bullet\cT(N)), 
	\quad \Omega^{(0,\bullet)}(Y; \nu_*\cO(N)\otimes\Lambda^\bullet\cT(Y))
\end{equation}
with the $\bar{\partial}$-differential. If needed, we emphasize the DG structure by appending 
$\bar{\partial}$. Thus, $\Phi:= W+\varphi$ is a function on $(T^\vee N[1],\bar{\partial})$. Restrict 
$\Phi$ to the (co-isotropic) DG-submanifold $(L(\nu),\bar{\partial})$. As $W$ is Morse-Bott and $\varphi$ 
is nilpotent, the critical locus $C\subset L(\nu)$ of $\Phi$ along the projection to $T^\vee Y[1]$ 
is seen, by the Jacobian test, to be $C^\infty$ isomorphic to the base (see, for instance, Example~\ref{1par} 
for the local calculation). The critical value $\Phi_c$ becomes a function on $(T^\vee Y[1],\bar{\partial})$. 

\begin{proposition}\label{Gamma}
$\Phi_c$ is a Maurer-Cartan element of $\bigoplus_{p>1} \Omega^{(0,p)}(Y;\Lambda^p \cT(Y))$.
\end{proposition}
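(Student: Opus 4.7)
The plan is to combine the local Morse simplification of \S\ref{localform} with a co-isotropic reduction along the Lagrangian correspondence $L(\nu)$. Over an affine $U\subset Y$, I would fix Morse normal coordinates $\{t_a\}$; write $\eta_i$ for the fibre coordinates on $T^\vee Y[1]$ and $\theta_a$ for those of $T^\vee N[1]$ along the fibre of $\nu$. Then $L(\nu)=\{\theta=0\}$ and
\[
\Phi\big|_{L(\nu)}(y,t,\eta) \;=\; \tfrac12\sum_a t_a^2 \;+\; \varphi^i(y,t)\,\eta_i.
\]
The fibre-critical equation $\partial_{t_a}\Phi|_{L(\nu)}=0$ reads $t_a = -(\partial_{t_a}\varphi^i)\eta_i$, and invertibility of the Morse Hessian promotes this, via the formal implicit function theorem, to a unique series $t=t(y,\eta)$ with $t(y,0)=0$. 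Patching over Morse charts (whose transitions preserve both $W$ and the fibre-critical locus) produces a section $\sigma\colon T^\vee Y[1]\to L(\nu)$ of the projection $L(\nu)\to T^\vee Y[1]$, and $\Phi_c = \sigma^*\bigl(\Phi|_{L(\nu)}\bigr)$.

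For the bidegree claim I would read off the two lowest orders: $\Phi_c(y,0)=W|_Y=0$ since $Y\subset W^{-1}(0)$, and the $\eta$-linear coefficient equals $\varphi^i(y,0)=0$ because $\varphi$ vanishes along $Y$. For higher orders one argues inductively: in the iterative expansion $t(y,\eta)=-(\partial_{t_a}\varphi^i)\eta_i + O(\eta^2)$ every additional factor of $\eta$ is paired with a new factor of $\varphi$, and $\varphi$ has bidegree $(1,1)$. Hence $\Phi_c = \sum_{p\ge 2}\Phi_c^{(p)}$ with $\Phi_c^{(p)}\in\Omega^{(0,p)}(Y;\Lambda^p\cT(Y))$; in particular $\Phi_c$ is even.

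The Maurer-Cartan equation is the main content. I would obtain it by combining the MC equation for $\Phi$ on $(T^\vee N[1],\bar{\partial})$ -- i.e., the integrability $\bar{\partial}\varphi + \tfrac12\{\varphi,\varphi\}=0$ together with the holomorphy $\bar{\partial}W + \{\varphi,W\}=0$ -- with co-isotropic reduction along $L(\nu)$. Applying the chain rule and using $\partial_{t_a}\Phi|_C=0$ to kill the cross terms coming from differentiating $t(y,\eta)$ should yield
\[
\bar{\partial}\Phi_c \;=\; \sigma^*\,\bar{\partial}\bigl(\Phi|_{L(\nu)}\bigr),\qquad \tfrac12\{\Phi_c,\Phi_c\}_Y \;=\; \sigma^*\bigl(\tfrac12\{\Phi,\Phi\}_N\big|_{L(\nu)}\bigr),
\]
so that the MC equation on $T^\vee N[1]$ pulls back to the MC equation for $\Phi_c$ on $T^\vee Y[1]$.

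The hard step will be the second identity above. The Schouten bracket $\{\cdot,\cdot\}_N$ on $T^\vee N[1]$ a priori picks up $\partial_{\theta_a}$-contributions that are cut out on $L(\nu)=\{\theta=0\}$; the key point will be that $\partial_{\theta_a}$ pairs against $\partial_{t_a}$, whose action on $\Phi|_{L(\nu)}$ vanishes on $C$, so the dropped $\theta$-contribution is harmless. Establishing this cleanly requires careful book-keeping of Dolbeault and polyvector signs and of the $\bar{\partial}$-derivatives of the implicit $t(y,\eta)$, all of which factor through $\partial_t\Phi|_C=0$. This is the dg / shifted-symplectic avatar of the standard co-isotropic reduction principle, and the remaining effort is to verify that the signs are consistent under the $\mathrm{(Dolbeault, polyvector)}$ bigrading.
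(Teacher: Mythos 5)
Your proof is essentially the paper's: both work in local Morse coordinates, exploit criticality of $t_c$ to kill the chain-rule cross terms coming from the implicit $t(y,\eta)$, and pull the Maurer--Cartan identity for $\Phi$ on $T^\vee N[1]$ back to one for $\Phi_c$ on $T^\vee Y[1]$. The paper packages steps that you write out via the chain rule and co-isotropic reduction into the single identity $\Phi(t) = \Phi_c + O(t-t_c)^2$: applying the derivations $\bar\partial$ and $\{\cdot,\cdot\}$ to both sides turns the quadratic remainder into $O(t-t_c)$, so evaluating the MC identity for $\Phi$ at $t=t_c$ immediately gives the MC equation for $\Phi_c$, sidestepping explicit bookkeeping. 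Your worry about the ``dropped $\theta$-contribution'' is also already resolved by the Morse normalisation of \S\ref{localform}, which makes $\varphi$ horizontal and hence $\Phi$ independent of the fibre momenta $\theta_a$, so those Schouten terms vanish identically and not merely on $C$ --- the criticality argument you sketch for them is correct but superfluous in Morse coordinates.
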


\begin{maintheorem}
$\mathrm{MF}(X;W)$ is equivalent to the deformation of $\mathrm{DS}Coh^\tau(Y)$ by $\Phi_c$.
\end{maintheorem}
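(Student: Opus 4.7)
The plan is to reduce to Theorem~\ref{splitcase} on a Stein cover of $Y$ where the splitting holds, and then to identify the resulting gluing datum with $\Phi_c$ by matching Kontsevich formality against the critical-value construction. Choose a Stein cover $\{U_\alpha\}$ of $Y$ over which the simplification of \S\ref{localform} applies, with Morse normal coordinates $\{t_i^{(\alpha)}\}$ and $W=\tfrac{1}{2}\sum_i(t_i^{(\alpha)})^2$, the vector component of $\varphi$ being purely horizontal. The local first neighbourhood is then split, and Theorem~\ref{splitcase} yields quasi-equivalences
\[
F_\alpha:\mathrm{MF}(X;W)|_{U_\alpha}\xrightarrow{\;\sim\;}\mathrm{DS}Coh^\tau(U_\alpha).
\]
Because the global matrix factorisation category is defined by patching local objects, $\mathrm{MF}(X;W)$ is the homotopy limit of this local data, and the failure of the $F_\alpha$ to agree on overlaps --- a \v{C}ech cocycle of autoequivalences of $\mathrm{DS}Coh^\tau$ --- promotes the target sheaf of categories into a deformation of $\mathrm{DS}Coh^\tau$ on $Y$.

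\textbf{From the \v{C}ech cocycle to a Maurer-Cartan element.} By the sheaf-level Kontsevich formality invoked in \S 2, even deformations of $\mathrm{DS}Coh^\tau(Y)$ are classified up to gauge by even Maurer-Cartan elements of the Dolbeault-Hochschild complex $\Omega^{(0,\bullet)}(Y;\Lambda^\bullet\cT(Y))$. Totalising the \v{C}ech cocycle of the previous step through the HKR/formality map produces such an element $\Psi$. The substance of the theorem is the identification $\Psi = \Phi_c$ modulo gauge equivalence.

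\textbf{Identification with $\Phi_c$.} I would work inside the DG algebra of functions on $(L(\nu),\bar{\partial})$ from (\ref{DSmanifolds}), comparing the two constructions term by term in the nilpotent expansion in $\varphi$. On the geometric side, solving the fibrewise Jacobian equation $\partial_t\Phi=0$ with $W=\tfrac{1}{2}\sum_i t_i^2$ expresses the $t_i$ and their dual momenta as formal power series in the horizontal components of $\varphi$, and back-substitution yields $\Phi_c$ as an explicit polyvector field on $Y$. On the categorical side, the transition autoequivalences between the $F_\alpha$ are built from the Clifford algebras on $T^\vee X|_{U_\alpha}$ and their changes of frame across $U_\alpha\cap U_\beta$; the HKR unwinding of these transitions involves exactly the same inversion of the fibre Hessian, because the dualisation $T^\vee X\leadsto T^\vee Y$ along $Y$ \emph{is} the Lagrangian reduction $L(\nu)\to T^\vee Y[1]$. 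Proposition~\ref{Gamma} then guarantees that the output satisfies the Maurer-Cartan equation, so defines a genuine deformation.

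\textbf{Main obstacle.} The delicate step is the identification above. Kontsevich formality is only defined up to homotopy, and its higher $L_\infty$ components carry nonlinear corrections that must be matched against the nonlinear corrections arising from iteratively solving $\partial_t\Phi=0$. Making the two hierarchies coincide --- rather than merely agree up to gauge --- requires a formality quasi-isomorphism chosen compatibly with the Morse coordinate systems across overlaps, and verification that the obstruction to splitting the first neighbourhood (Remark~\ref{splitting}) enters $\Phi_c$ as predicted. This is precisely where the full Gerstenhaber structure on Hochschild cochains, advertised in the abstract, is essential, and where the discrepancy with the naive conjecture of \cite{krs} lives.
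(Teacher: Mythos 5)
Your plan reduces to Theorem~1 on a Stein cover and then tries to recognise the resulting \v{C}ech gluing cocycle as $\Phi_c$; the paper does something genuinely different and considerably more tractable. Instead of working with local splittings and transition autoequivalences, the paper defines a global $L_\infty$ map $\chi$ by extracting the critical value along the Lagrangian projection $L(\nu)\to T^\vee Y[1]$, verifies the $L_\infty$ property by a direct Schouten-bracket computation in Morse coordinates (Proposition~\ref{mc} and the first two parts of Theorem~3), and then matches $\chi$ against the Thom quasi-equivalence not by unwinding a \v{C}ech cocycle but by degenerating to the normal cone --- scaling the vertical directions of $N$ by $\xi$ and $W$ by $\xi^{-2}$, so that $\xi=0$ is the split case of Proposition~\ref{factorcliff} --- and tracking $d\chi$ along the family, with a separate 2-periodicity argument to set $\xi=1$.

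The step you flag as ``delicate'' is a genuine gap, and it is the entire content of the theorem. You write that you \emph{would} compare the two constructions term by term inside $L(\nu)$, but the comparison is not carried out, and the obstacles are serious: (a) a \v{C}ech cocycle of autoequivalences does not by itself give a Hochschild cochain --- the autoequivalences $F_\beta F_\alpha^{-1}$ must first be shown to be \emph{inner} (exponentials of cochains) before a logarithm makes sense, and your construction supplies no such path-connectedness to the identity, which is precisely what the paper's $\xi$-degeneration provides for free; (b) the \v{C}ech-to-Dolbeault totalisation and the choice of formality quasi-isomorphism compatible with overlapping Morse frames must be made simultaneously, and formality maps are notoriously non-explicit --- a point the paper itself underlines in \S\ref{formdef} and the appendix; (c) the resulting class must then be equated with the implicit power series $\Phi_c$, which is what your ``HKR unwinding involves exactly the same inversion of the fibre Hessian'' asserts without argument. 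The paper sidesteps all three by staying entirely within the $L_\infty$ algebras of Dolbeault polyvector fields with the Schouten bracket, where no choice of formality map intervenes and the critical-value map $\chi$ is a closed-form geometric object.

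Finally, a smaller point: your appeal to Proposition~\ref{Gamma} to conclude that the output ``satisfies the Maurer-Cartan equation'' is circular in your framework --- Proposition~\ref{Gamma} asserts that $\Phi_c$ is Maurer-Cartan, but $\Psi$ comes from a deformation and is Maurer-Cartan automatically; what you need is $\Psi=\Phi_c$ up to gauge, which is exactly what is missing.
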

\noindent
The proposition is a simple calculation (\S3), but Theorem~2 requires more preparation, 
and will be proved in a more conceptual form (Theorem~3 below).

\begin{remark}[Split case] \label{splitting}
The normal derivative of $\varphi$ at $Y$ is $\bar{\partial}$-closed in $ \Omega^{(0,1)}\left(Y; 
N^\vee\otimes \cT(Y)\right)$, and its class in $\mathrm{Ext}^1_Y(N; TY)$ obstructs the splitting the first 
neighbourhood of $Y$ in $X$. 
In the split case, we can remove it by a gauge transformation. Once $\varphi$ vanishes 
\emph{quadratically} at $Y$, $C$ agrees with the zero-section $T^\vee Y[1]$ and $\Phi_c=0$, so Theorem~2 
implies Theorem~\ref{splitcase}.   
\end{remark}

\begin{example} \label{1par}
Take $N = Y\times \mathrm{Spec}\,\bC[\![t]\!]$, $W=t^2/2$, and describe $X$ 
as a deformation of $N$ by the Maurer-Cartan path $\varphi(t) = \sum_{n\ge 1} \varphi_n 
t^n$, $\varphi_n\in \Omega^{(0,1)}(Y;TY)$. The critical point equation 
\[
\Phi'(t) = t + \varphi'(t) = 0
\]
may be solved degree-by-degree, as $\deg t=0$ and $\deg\varphi=2$. We get 
the solution and critical value
\[
\begin{split}
t_c &= -\varphi_1 + 2\varphi_1\varphi_2 -4\varphi_1\varphi_2^2 - 3\varphi_1^2\varphi_3 +O(8), \\
\Phi_c &= W(t_c)+\varphi(t_c) = -\frac{1}{2}\varphi_1^2 +\varphi_1^2\varphi_2 -2\varphi_1^2\varphi_2^2 
-3\varphi_1^3\varphi_3 + O(10).
\end{split}
\]
Universally, $\Phi_c$ is a power series in the $\varphi_n$, but on a fixed $Y$ it truncates to 
$\varphi$-degree $\dim Y$. One can see that $\varphi_n$ first appears in a monomial of $\varphi$-degree 
$n+1$, so $\Phi_c$ only depends on the neighbourhood of $Y$ of order $(\dim Y -1)$. Thus, $\Phi_c=0$ 
when $Y$ is a curve. For a surface, we only see $-\frac{1}{2}\varphi_1^2$, representing a Dolbeault 
class in $H^2(Y;\Lambda^2TY)$. For a 
$K3$ surface, this gives a single number obstructing the \cite{krs} conjecture. This 
obstruction was found by Pomerleano and Preygel \cite{pp}.
\end{example}

\begin{remark}[Thom isomorphism]
For an embedding $Y\subset X$ of \emph{real} manifolds, we have an isomorphism in topological $K$-theory
\[
{}^\tau K^*(Y) \xrightarrow{\ \sim\ } K^*(X,X\setminus Y),  
\]
in which the \emph{$K$-theory twisting $\tau$} lives in the very same super-Brauer group mentioned earlier; 
indeed, this partly motivated the introduction of Brauer twists in \cite{dk}. Theorems~1 and 2 can be 
seen as categorical versions of this, the novelty being the addition of a \emph{holomorphic twist} $\Phi_c$ 
on $Y$. The analogy is quite strong: when $X$ is a vector bundle over $Y$ with quadratic 
$W$, the categorical equivalence is implemented by the Atiyah-Bott-Shapiro Thom class of $K$-theory (\S2). 
Unfortunately, I do not know an explicit model for this Thom class when moving away from the quadratic 
vector bundle case, and the argument will proceed instead via abstract deformation theory.    
\end{remark}

\section{Refreshers}
I collect in this section some basic background for the reader's convenience.

\subsection{Clifford  bundles.} \label{cliff}
Take $Y$ and $N$ as before, but with $W\in\cO(N)$ quadratic and non-degenerate along the 
fibres of $N$. The bundle of super-algebras 
$\mathrm{Cliff}(N,W)$ over $Y$ is generated over $\cO_Y$ by sections of $N$, declared to be odd, 
with relations $\sigma\sigma'+\sigma'\sigma = \partial^2W/\partial\sigma\partial\sigma'$. 
This algebra is invertible over $\cO_Y$ modulo Morita equivalence, with inverse
$\mathrm{Cliff}(N,-W)$. Invertibility identifies the deformation theories of the categories 
of super-complexes of coherent $\cO_Y$-modules and $\mathrm{Cliff}(N,W)$ super-modules. 

If a global $\mathrm{Spin}^c$-module $S^\pm$ for $\mathrm{Cliff}(N)$ exists (a \emph{projective} version
always does), then its 
endomorphism algebra $\cE$ is $\cO_Y$ or $\cO_Y\otimes \mathrm{Cliff}(1)$, according to the 
parity of $\mathrm{rank}\, N$, and $S^\pm$ gives a Morita equivalence $\mathrm{Cliff}(N,W)\sim\cE$. 
Existence of $S^\pm$ is obstructed by Stiefel-Whitney 
classes of the orthogonal bundle $N$, specifically $w_1\in H^1(Y;\bZ/2)$ and the image $w_2^c$ of 
$w_2$ in $H^2(Y;\cO^\times)$. We can compare different quadratic bundles by the same Morita argument
to conclude that the category of 
super $\mathrm{Cliff}(N)$-modules depends only on the ``Brauer twist'' $\tau = (\pm, w_1, w_2^c)$ of 
the introduction. We denote it by $\mathrm{DS}Coh^\tau(Y)$. 

The twisted category can also be built as follows. Local equivalences between super-modules for 
$\mathrm{Cliff}(N)$ and $\cE$ are mediated by $\mathrm{Spin}$ modules $S^\pm$. On overlaps, the 
orthogonal group acts projectively on $S^\pm$, with parity and projective co-cycles 
classified by $w_1, w_2^c$. These define a topological action of the transition functions 
on the local $\cE$-module categories, patching them to $\mathrm{DS}Coh^\tau(Y)$. 

\begin{remark}[Graded Brauer group]\label{grbr} 
The three-component Brauer group is a variant of the one defined by Donovan and Karoubi \cite{dk} 
in  their study of \emph{twisted coefficients} for topological $K$-theory; we only replaced their 
class in (the torsion part of) $H^3(Y;\bZ)$ by a refinement in $H^2(Y;\cO^\times)$. (When the structure 
sheaf $\cO$ is $C^0$ or $C^\infty$, $H^2(Y;\cO^\times) \cong H^3(Y;\bZ)$; at the other extreme, 
\'etale cohomology is needed in the algebraic case, to capture projective obstructions of bundles.) 
The group law is slightly subtle, matching the addition formula for Stiefel-Whitney 
classes; but this will play no role for us.
\end{remark}

\subsection{Quadratic vector bundles.} 
The relation between quadratic matrix factorisations and Clifford modules, originally 
established in \cite{beh} over general ground rings, is the following 
\emph{Thom isomorphism}, a special case of Theorem~1. This generalizes the 
\emph{Kn\"orrer periodicity} theorem \cite{or}. 
\begin{proposition}\label{factorcliff}
The DS category $\mathrm{MF}(N;W)$  is quasi-equivalent to $\mathrm{DS}Coh^\tau(Y)$. 
\end{proposition}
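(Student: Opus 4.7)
The plan is to exhibit a distinguished matrix factorisation $K\in\mathrm{MF}(N;W)$ that generates the category and whose endomorphism DG super-algebra recovers the Clifford bundle $\mathrm{Cliff}(N,W)$ over $Y$. Once $\mathrm{End}(K)$ is identified, the Morita discussion in \S\ref{cliff} supplies the further equivalence $\mathrm{Cliff}(N,W)\text{-mod}\simeq\mathrm{DS}Coh^\tau(Y)$, with the Brauer twist $\tau=(\pm,w_1,w_2^c)$ capturing the obstruction to finding a global $\mathrm{Spin}^c$ structure on $N$.

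First I would construct $K$ as the global spinor/Koszul factorisation. Take $\Lambda^\bullet\nu^*N^\vee$, parity-graded by form degree, with the Koszul differential
\[
d = \iota_t + t^\flat\wedge,
\]
where $t\in\Gamma(N,\nu^*N)$ is the tautological section and $t^\flat\in\nu^*N^\vee$ is its image under the polarisation of $W$. A direct check gives $d^2=W\cdot\mathrm{Id}$, so $K$ is a well-defined object of $\mathrm{MF}(N;W)$. Its endomorphism DG super-algebra is, fibrewise, the Koszul resolution of the zero section; the cohomology is therefore concentrated along $Y$ and is generated over $\cO_Y$ by the two odd operators $\iota_t$ and $t^\flat\wedge$, whose anticommutator reproduces the polarisation of $W$. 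This is precisely the defining relation of $\mathrm{Cliff}(N,W)$, so $\mathrm{End}(K)\simeq\mathrm{Cliff}(N,W)$ as bundles of DG super-algebras over $Y$.

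Next I would verify that $K$ is a compact generator of $\mathrm{MF}(N;W)$. This is local on $Y$: on a Stein affine $U\subset Y$ over which $N$ trivialises, $W$ becomes a non-degenerate quadratic form in linear fibre coordinates, and the classical Kn\"orrer/Buchweitz theorem \cite{or,beh} asserts that the local spinor factorisation generates $\mathrm{MF}(U\times\bC^n;W)$. Since $R\mathrm{Hom}(K,-)$ respects restriction and the local identifications are compatible with the globally defined differential on $K$, generation globalises to a quasi-equivalence $\mathrm{MF}(N;W)\simeq\mathrm{Cliff}(N,W)\text{-mod}$; composing with the Morita equivalence of \S\ref{cliff} completes the proof.

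The hard part is the globalisation step: one needs a form of Zariski or \'etale descent for $\mathrm{MF}$ along $Y$, and one must confirm that the cocycle relating local spinor trivialisations on overlaps is precisely the obstruction class $\tau$. Working throughout with the globally defined algebra bundle $\mathrm{Cliff}(N,W)$ as the intermediate target bypasses the projective ambiguity inherent in spinor modules and isolates the twist $\tau$ to the last, purely Morita-theoretic step reviewed in \S\ref{cliff}.
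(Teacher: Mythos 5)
Your route — take the Koszul/Clifford factorisation $K=\Lambda^\bullet\nu^*N^\vee$ as a generator, identify $\mathrm{End}_{\mathrm{MF}}(K)$ with $\mathrm{Cliff}(N,W)$, and finish with Morita theory and a local-to-global patching via Kn\"orrer periodicity — is precisely the alternative the paper itself sketches in Remarks~\ref{ltg} (local-to-global) and the Clifford modules remark that follows, not the proof it actually gives. The paper's given proof is functorial rather than generator-based: it builds a pair of adjoint kernels out of the graded-projective Spinor bundle $S^\pm$ and shows by explicit convolution that both composites are the identity, using $\mathrm{Cliff}(N)\cong\mathrm{End}_\bC(S^\pm)$ with the commutation differential to resolve the zero-section in one direction, and a degeneration (scaling $W\to 0$) to the Koszul resolution of the diagonal in the other. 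That argument buys you the equivalence without having to independently establish descent for $\mathrm{MF}$ along $Y$ or that $K$ generates; your approach buys a more explicit identification of the endomorphism algebra but inherits exactly the globalisation issues you flag.

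There is, however, a concrete gap in your identification of $\mathrm{End}_{\mathrm{MF}}(K)$. You claim the cohomology of $\mathrm{End}(K)$ is ``generated over $\cO_Y$ by the two odd operators $\iota_t$ and $t^\flat\wedge$.'' Neither of those is a closed endomorphism in $\mathrm{MF}(N;W)$: with $d=\iota_t+t^\flat\wedge$, one has $[d,\iota_t]_+=[d,\,t^\flat\wedge]_+=W\cdot\mathrm{Id}$, so they are not cocycles (they are the pieces of the differential, not endomorphisms commuting with it). The operators that \emph{do} survive to cohomology and generate a Clifford algebra are the right Clifford multiplications $\iota_n - n^\flat\wedge$ for local sections $n$ of $N$: these anticommute with $d=\iota_t+t^\flat\wedge$ in the graded sense, and their anticommutators reproduce the bilinear form (with a sign, giving $\mathrm{Cliff}(N,-W)\cong\mathrm{Cliff}(N,W)^{\mathrm{op}}$). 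This is exactly the ``deforming the traditional Koszul duality between symmetric and exterior algebras'' point of the paper's Clifford-modules remark. With that correction the strategy is sound, but as written the key computation identifying $\mathrm{End}(K)$ is wrong.
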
  

\begin{proof}
The equivalence is given by the Atiyah-Bott-Shapiro Thom class \cite{abs}. The graded-projective 
Spinor bundle $S^\pm$ of $N$ splits locally into its even and odd parts (swapped globally by $w_1$). 
Pulled back to the total space $N$, $S^\pm$ carries 
a pair  of endomorphisms $d_+: S^+ \rightleftarrows S^-:d_-$, which over a point $n\in N$ are the Clifford 
multiplications by the vector $n$. This $\tau$-twisted, curved complex $(S^\pm, d_\pm)$ is the kernel for 
a pair of adjoint functors between the categories $\mathrm{MF}(N;W)$ and $\mathrm{DS}Coh^\tau(Y)$,
which we claim are quasi-equivalences. 

The composition $\mathrm{DS}Coh^\tau \to \mathrm{MF} \to \mathrm{DS}Coh^\tau$ is the integral over $N$ 
of a kernel on $Y\times N\times Y$ supported on $Y\times_Y N\times_Y Y$. In the even case, this kernel 
is the bundle $\mathrm{Cliff}(N) \cong\mathrm{End}_\bC(S^\pm)$, with fibre-wise differential, over 
$n\in N$, defined by the Clifford commutation action of $n$. This resolves the zero-section $Y\subset N$ 
and so induces the identity. The odd case has an extra factor of $\mathrm{Cliff}(1)$ in 
$\mathrm{End}_\bC(S^\pm)$: this acts as the identity bi-module for the local $\cE$-bundles 
on $Y$. 

The reverse-composed kernel $\cK$ is supported on $N\times_Y N\subset N\times N$, and arises by 
tensoring over $\cE$ the Spin complexes on the two $N$-factors. It is the Spin complex for $W\oplus (-W)$ on 
$N\times_Y N$. As the other composition is the identity, $\cK$ is a projector. I also claim that 
$\cK$ specialises to the identity functor on $\mathrm{DS}Coh^\tau(N)$, under a degeneration that scales 
the curvature $W$ to zero. Finiteness of (local presentations of) matrix facorisations then implies that 
$\cK$ represents the identity throughout.

To see the degeneration, polarise $N\times_Y N$ bi-diagonally as $N_+\times_Y N_-$; $W$ places the 
$N_\pm$ in duality. This presents $\cK$ as the curved complex whose fibre over $(n_+,n_-)$ is
\[
\Lambda^{ev}N_- \:
\genfrac{}{}{0pt}{}{\raisebox{-5pt}{$\xrightarrow{\:\:\iota(n_+)+\vep(n_-)\:\:}$}}
{\raisebox{5pt}{$\xleftarrow[\:\:\iota(n_+)+\vep(n_-)\:\:]{}$}}\:
\Lambda^{odd} N_-
\] 
with $\iota, \vep$ denoting contraction and multiplication. The degeneration scales the contraction to 
zero, and leads to the Koszul resolution of the diagonal $N_+$, which represents the identity functor.
\end{proof}

\begin{remark}[Local-to-global]\label{ltg}
An alternative proof reduces the proposition to the Kn\"orrer periodicity theorem, which we recover 
locally over $Y$ by trivialising $N$. Since the functors are globally defined, global isomorphy 
is a formal consequence of the sheaf property of the categories $\mathrm{DS}Coh$ and 
$\mathrm{MF}$: quasi-equivalent global categories are obtained by patching local objects and morphisms 
via (coherent systems of) quasi-isomorphisms. In the analytic case of $\mathrm{DS}Coh$, this naturally 
leads us to the category of complexes of sheaves with coherent cohomologies, rather than complexes of 
coherent sheaves (which may be inadequate for computing the correct derived $\mathrm{Hom}$ spaces). 
A more substantive statement on the $\mathrm{MF}$ side is that the global matrix factorisations define 
a quasi-equivalent DS category \cite{lp, or2} for quasi-projective $Y$. 
\end{remark}

\begin{remark}[Clifford modules] Instead of the Spin module, we can use the Clifford bundle version of the Thom class, 
with fiber $\mathrm{Cliff}(N; W)$ and right Clifford multiplications. A Morita equivalence between 
$\mathrm{MF}(N;W)$ and $\mathrm{Cliff}(N; W)$-modules is induced by this Thom bi-module, deforming the 
traditional Koszul duality between symmetric and exterior algebras. The Brauer twist $\tau$ is 
now concealed in $\mathrm{Cliff}(N)$.
\end{remark}

\subsection{$L_\infty$ structures and maps.}\label{linf}
Let $V$ be a (cohomologically) graded vector space and denote by $V[1]$ its incarnation with the grading 
shifted down by one. An $L_\infty$ structure on $V$ is a degree-one vector field $B$ on $V[1]$ whose Lie 
action squares to zero, $\cL_B\circ \cL_B=0$. Classically, $V$ is a Lie algebra, the (purely quadratic) 
vector field $B$ is one-half of the Lie bracket $V^{\wedge 2}\to V$, and the null-square condition 
is the Jacobi identity. Slightly more generally, a linear term of $B$ is a differential on $V$. An 
$L_\infty$ map $\mu:(V,B)\to (V',B')$ is a graded (but not necessarily linear) map $V[1]\to V'[1]$ compatible 
with the vector fields. As a loop space, $V[1]$ is naturally based at $0$, so one usually requires 
that $B(0)=0$ and $\mu(0)=0$. One defines $\mu$ to be a quasi-isomorphism if $d\mu$ is so on the tangent 
space homology at $0\in V[1]$, with respect to the linearisation of $B$; this amounts to completing 
$V$ at $0$. In Theorem~3 below, we will secretly translate to base the domain spaces at the element 
$\partial^2W$. 

The \emph{Koszul-Chevalley complex} $\mathrm{Chev}^*(V,B)$ is the commutative DG algebra of functions  
on $V[1]$ (usually, taken to mean formal Taylor series at $0$) with differential $\cL_B$, and the notion 
of $L_\infty$ quasi-isomorphism reduces to that for commutative DG algebras in this localisation. In the 
case of DG Lie algebras, $\mathrm{Spec}\,\mathrm{Chev}^*(V,B)$ is a derived version of the moduli stack 
of solutions of the Maurer-Cartan equation $\partial v + \frac{1}{2}[v,v]=0$ in $V$, modulo 
formal gauge transformations. (The formula continues with higher Taylor components of $B$, if present, 
picking out the zero-locus.) A path $\gamma(t)$ of Maurer-Cartan solutions is thus 
an $L_\infty$ map from the Abelian rank-one Lie algebra $\fra^1$, in degree $1$, to $V$. 

\subsection{Formal deformations.}\label{formdef}
When $V$ is the Hochschild cochain complex of a DG linear category $C$, shifted down by one and with its 
$L_\infty$ structure, a formal but fundamental result identifies $\mathrm{Chev}^*$ with the moduli stack 
of formal deformations of $C \pmod{\mathrm{Aut}\:C}$. In particular, Maurer-Cartan paths correspond 
to $1$-parameter formal deformations. One referee indicated that the interpretation of these statements 
does not command consensus in the literature when \emph{curvings} (components of Hochschild degree zero) 
are included in the deformation class. For instance, it is shown in \cite{kl} 
that a reduction to the deformation theory of algebras is obstructed by a \emph{nilpotency} condition 
on the curving. This, however, does not affect our discussion: our deformation \emph{is} nilpotent 
in the sense of \cite{kl}, despite the presence of $W$. This is seen explicitly 
in the deformation class $\Phi_c$, but is clear \emph{a priori} from the local (on $Y$) triviality 
(Proposition~\ref{factorcliff}): deformations that arise from patching are necessarily nilpotent, over 
a base of finite homological dimension. Nonetheless, to dispel any unease that may be caused by this 
evasion, we recall  in the Appendix how one constructs one-parameter deformations from possibly curved, 
Maurer-Cartan paths. 

\subsection{Kontsevich formality (after Tamarkin).} The Hochschild-Kostant-Rosenberg theorem identifies 
$HH^*$ of a regular affine variety with the Gerstenhaber algebra of its polyvector fields. The $E_2$ Hochshild 
cohomology of the latter, which classifies its formal deformations \cite{f}, is just $\bC$, stemming from 
the connected part $B^2\bC^\times$ of the automorphism group; so DG 
enhancements of this same Gerstenhaber algebra are classified by the de Rham group $H^3(\bC^\times)$ 
(and vanish locally). Since there is no such characteristic class of complex manifolds, polyvector 
fields are the local model for the deformation Gerstenhaber complex, and its Dolbeault resolution forms 
the correct global model. (The same absence of characteristic classes makes the statement equivariant 
under the group of local coordinate changes.) This makes our computation of the deformation class 
intrinsically meaningful. 
Unfortunately, there seems to be no universal differential Gerstenhaber resolution in the algebraic 
category, whereas the Dolbeault resolution 
leads to a more explicit answer.

\section{Deformation theory proof}
The categorical Thom equivalence of Proposition~\ref{factorcliff} leads to an isomorphism between the 
respective (derived)  formal deformation stacks. This underlies the bijection between their complex 
points asserted in Theorem~2. This isomorphism stems from a certain $L_\infty$ quasi-isomorphism $\chi$ 
between the controlling differential super-Lie algebras, which we will spell out geometrically and 
algebraically. 

Geometrically  (Proposition~\ref{mc}), the critical locus $C$ is a super-submanifold of the variant of 
$T^\vee N[1]$ with modified differential $\bar{\partial} + \{\Phi,\_\}$. 
The latter is the Dolbeault resoution of the holomorphic DS manifold $T^\vee X[1]$ with differential 
$\{W,\_\}$. The inclusion of $C$ gives a quasi-isomorphism between the Hochschild cohomologies 
of the $\Phi_c$-deformed category $\mathrm{DS}Coh^\tau(Y)$ and of $\mathrm{MF}(N;W)$, which will be 
the linearisation of $\chi$ at (integrable points) $\Phi$. By degeneration to the normal bundle, we will 
improve this to show (Theorem~3) that the map induced by $\chi$ on Maurer-Cartan solutions matches the 
formal deformation theories of the Thom isomorphism. This will also prove Theorem~2.

Before spelling out the geometric refinement of Proposition~\ref{Gamma}, 
we verify the original statement, since we need the computation.

\begin{proof}[Proof of Proposition~\ref{Gamma}]
Consider first the special case of Example~\ref{1par}. 
The $(p,p)$ nature of $\Phi_c$ is clear. Next, $\Phi'(t_c)=0$, so $\Phi(t) = \Phi_c + O(t-t_c)^2$, 
and then 
\begin{equation}\label{criteq}
\bar{\partial}\Phi_c + \frac{1}{2}\{\Phi_c,\Phi_c\} = \bar{\partial}\left(\Phi(t)\right) + 
	\frac{1}{2}\{\Phi(t),\Phi(t)\} + O(t-t_c).
\end{equation}
On the right, the term written out vanishes identically in $t$, so setting $t=t_c$ shows that 
the left side is zero. For general $X$, the statement being local over $Y$, we repeat this in 
Morse coordinates (\S\ref{localform}). 
\end{proof}

\begin{proposition}\label{mc} 
The space $C$, with DS algebra of functions $\Omega^{(0,\bullet)}\left(Y;\Lambda^\bullet
\cT(Y)\right)$ and differential $\bar{\partial} + \{\Phi_c,\_\}$, is a super-submanifold of  
$(T^\vee N[1],\bar{\partial}_{\,\Phi})$. The embedding is a quasi-equivalence. 
\end{proposition}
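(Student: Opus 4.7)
The plan is to break the proposition into three sub-claims: (i) $C$ is a well-defined sub-super-manifold of $T^\vee N[1]$, (ii) the differential inherited from $\bar{\partial}_\Phi$ on $\mathcal{O}(C)$ equals $\bar{\partial} + \{\Phi_c,\_\}$, and (iii) the inclusion is a quasi-isomorphism. All three claims are local over $Y$, so I would work over a Stein $U\subset Y$ in the Morse coordinates $t_i$ of \S\ref{localform} and globalise by naturality. In these coordinates, with $W=\tfrac{1}{2}\sum t_i^2$ and $\varphi$ purely horizontal, the function algebra of $T^\vee N[1]$ takes the form
\[
\mathcal{A} \;=\; \Omega^{(0,\bullet)}\bigl(U;\,\mathcal{O}(U)[\![t]\!]\bigr) \otimes \Lambda[\eta^\alpha,\xi^i],
\]
with odd generators $\eta^\alpha,\xi^i$ dual to $\partial/\partial y_\alpha,\partial/\partial t_i$. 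The critical point equation $\partial_{t_i}\Phi=0$, of leading form $t_i + \partial_{t_i}\varphi|_{t=0}$, admits by the formal implicit function theorem a unique solution $t_i = t_i^c(y,\eta,d\bar y)$, built order by order as in Example~\ref{1par}. The ideal $I=(\xi^i,\,t_i-t_i^c)$ cuts out $C\hookrightarrow T^\vee N[1]$ and identifies $C$, as a graded manifold, with $T^\vee Y[1]$.

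For the chain-map property and the induced differential, I would verify $\bar{\partial}_\Phi(I)\subset I$ on the two families of generators of $I$. In Morse coordinates $\Phi$ has no $\xi$-component, so the Schouten bracket gives $\{\Phi,\xi^i\} = -\partial_{t_i}\Phi$, which lies in $I$ by the definition of $t^c$. On $t_j-t_j^c$ (even, independent of $\xi$), the Maurer-Cartan equation $\bar{\partial}\Phi + \tfrac{1}{2}\{\Phi,\Phi\}=0$ combined with the critical point equation forces $\bar{\partial}_\Phi(t_j-t_j^c)\in I$; this is essentially the argument underlying Proposition~\ref{Gamma}, but performed before rather than after reduction. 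The induced differential on $\mathcal{A}/I$ then agrees with $\bar{\partial} + \{\Phi_c,\_\}$, thanks to the chain-rule identity
\[
\partial_{y_\alpha}\Phi_c \;=\; \partial_{y_\alpha}\Phi|_C + \partial_{t_i}\Phi|_C\cdot\partial_{y_\alpha}t_i^c \;=\; \partial_{y_\alpha}\Phi|_C,
\]
together with its counterparts for the $\bar{\partial}$- and $\eta^\alpha$-derivatives, all of which hinge on $\partial_{t_i}\Phi|_C=0$.

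The quasi-isomorphism is the technical heart and the expected main obstacle. The cleanest route I see is a parametric formal Morse lemma: the substitution $\tilde t_i:=t_i-t_i^c$ brings $\Phi-\Phi_c$ into the form $\tfrac{1}{2}H(\tilde t)+O(\tilde t^3)$ with non-degenerate Hessian $H$, and a further formal change of the $\tilde t$-coordinates (parametric in $y,\eta,d\bar y$) removes the higher-order tail, yielding $\Phi = \Phi_c + \tfrac{1}{2}H(\tilde t)$ exactly. In these coordinates $\bar{\partial}_\Phi$ splits as $\bar{\partial} + \{\Phi_c,\_\}$ acting on the base variables $(y,\eta,d\bar y)$ plus the fibre Koszul differential $\sum_{i,j}H_{ij}\tilde t_j\,\partial/\partial\xi^i$ on $\bC[\![\tilde t]\!]\otimes\Lambda[\xi]$. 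The latter is the Koszul complex for the regular sequence $H_{ij}\tilde t_j$ and resolves the augmentation; tensoring with the horizontal factor, the inclusion $(\mathcal{O}(C),\bar{\partial}+\{\Phi_c,\_\})\hookrightarrow(\mathcal{A},\bar{\partial}_\Phi)$ becomes a quasi-isomorphism. The hard part is precisely justifying this parametric Morse reduction — equivalently, controlling the spectral sequence of a $\tilde t$-weighted filtration — in the completed Dolbeault setting, since the $\bar{\partial}$ and the $\varphi$-perturbation couple the fibre variables to the base in ways that demand care.
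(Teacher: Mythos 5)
Your decomposition into (i) cutting out $C$ by an ideal, (ii) identifying the induced differential, and (iii) quasi-isomorphy of the inclusion is exactly the structure of the paper's proof, and your ideal $I=(\xi^i,\,t_i-t_i^c)$ is identical to the paper's $\cI_C=(H,\,t-t_c)$ (the paper writes $H$ for what you call $\xi$). Your chain-map computation — $\{\Phi,\xi^i\}=-\partial_{t_i}\Phi\in I$ for the odd generators, and Maurer-Cartan combined with criticality for the even generators — is also the paper's computation; the paper's version factors $\Phi'(t)=(t-t_c)\Phi''(t_c)+O(t-t_c)^2$ and then uses invertibility of $\Phi''(t_c)$, which is the cleaner bookkeeping of the same idea. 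Your identification of the induced differential via the chain rule matches the paper's observation $\Phi(t)-\Phi_c=O(t-t_c)^2$. Up to this point you are re-deriving the paper's proof.

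Where you diverge is step (iii). You propose a parametric formal Morse lemma — change coordinates $\tilde t_i=t_i-t_i^c$ and then a further $(y,\eta,d\bar y)$-dependent formal fibre diffeomorphism that makes $\Phi-\Phi_c$ exactly quadratic in $\tilde t$ — and flag this as the technical heart, worrying about convergence of the $\tilde t$-weighted spectral sequence in the Dolbeault setting. The paper dispatches this step in one line: $(\Lambda^\bullet\cT(X),\{W,\_\})$ resolves the skyscraper $\Lambda^\bullet\cT(Y)$ locally, and the whole complex is its Dolbeault resolution. Concretely, filter by the number of Dolbeault $(0,q)$ legs (equivalently, set the nilpotent perturbation $\varphi$ and $\Phi_c$ to zero); this filtration is bounded above by $\dim Y$, so the spectral sequence converges with no further argument, and its $E_1$ page is the pure fibre Koszul complex of $\{W,\_\}$, which resolves the augmentation because the $\partial_{t_i}W=t_i$ form a regular sequence. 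The parametric Morse normalisation you worry about is not needed: regularity of the sequence $\partial_{t_i}\Phi$, which already follows from non-degeneracy of the Hessian of $W$, is all one uses, and boundedness of the Dolbeault degree substitutes for the completeness hypotheses one would otherwise have to verify. So your concern is legitimate in general, but the structure of the Dolbeault model handles it automatically, and the quasi-isomorphy is the easy part rather than the hard part of the proposition.
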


\begin{proof} 
The statement is local over $Y$, so we use the Morse coordinates of \S\ref{localform}. 
Quasi-equivalence is clear, as the complex $(\Lambda^\bullet\cT(X), \{W,\_\})$ resolves 
locally\footnote{This fails globally: in general we deform $Y$.}  the skyscraper sheaf 
$\Lambda^\bullet\cT(Y)$ on $Y$. Next, assume for notational ease that only one coordinate 
$t$ is present. The differential $dt|_Y$ is a local frame of $N$, or a (linear) function 
$H$ on $T^\vee N[1]$, with zero-locus $L(\nu)$. The functions $H$ and $(t-t_c)$ generate 
the ideal $\cI_C$ of $C$ in $T^\vee N[1]$. We check that $\bar{\partial}_{\,\Phi} \cI_C\subset  
\cI_C$ and that $\bar{\partial}_{\,\Phi} = \bar{\partial}_{\,\Phi_c} \mod \cI_C$.

First, the Hamiltonian flow $\{H,\_\}$ acts on $L(\nu)$ by vertical translation, so 
$\bar{\partial}_{\,\Phi} H = -\{H,\Phi\}$ vanishes on the 
critical locus $C$. Next, using the notation in the proof of Prop.~\ref{Gamma}, substitute 
\[
\Phi'(t) = (t-t_c)\Phi''(t_c) + O(t-t_c)^2
\]
in the identity $\bar{\partial}\Phi'(t) + \{\Phi(t),\Phi'(t)\}\equiv 0$ to get
\[
\left(\bar{\partial}(t-t_c) + \{\Phi(t),(t-t_c)\}\right)\cdot\Phi''(t_c) = O(t-t_c).
\]
Invertibility of $\Phi''(t_c)$ implies that the ideal $(t-t_c)$ is closed under $\bar{\partial}_{\,\Phi}$, 
and so therefore is $\cI_C$. 

Finally, from $\Phi(t)-\Phi_c = O(t-t_c)^2$ we see that $\bar
{\partial}_{\,\Phi(t)} = \bar{\partial}_{\,\Phi_c} \mod \cI_C$.
 \end{proof} 

\subsection{The $L_\infty$ equivalence.} 
For the Hochschild complexes of $\mathrm{DS}Coh^\tau (Y)$ and $\mathrm{MF}(N,\partial^2W)$, we use the 
first two Dolbeault function spaces in \eqref{DSmanifolds}, but with differential $\bar\partial + 
\{\partial^2W, \_\}$ in the second. The relevant $L_\infty$ algebras are their down-shifts by one, with 
the Schouten bracket. Define the (non-linear) map 
\[
\chi:  \quad \Omega^{(0,\bullet)}(Y; \nu_*\Lambda^\bullet\cT(N)) \to \Omega^{(0,\bullet)}(Y; \Lambda^\bullet \cT(Y))
\]  
sending a function $\eta$ on $T^\vee N[1]$ to the critical value $\Phi_c$ of (the $L(\nu)$-restriction of) 
$\Phi:= \partial^2W+\eta$ along the projection $L(\nu)\to T^\vee Y[1]$ . In this definition, we treat a 
degree-zero component of $\eta$ as small or formal, so that $\partial^2W$ is the leading term in computing 
the critical locus. 

\begin{maintheorem}
$\chi$ is an $L_\infty$ quasi-isomorphism. Linearising at any $\Phi$ gives the map in 
Proposition~\ref{mc}.  Finally, $\chi$ matches the equivalence of formal deformation stacks induced 
by the Thom quasi-equivalence $\mathrm{MF}(N,\partial^2W) \sim \mathrm{DS}Coh^\tau(Y)$.
\end{maintheorem}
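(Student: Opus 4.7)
The plan is to prove the three claims in sequence. The first two follow quickly from computations already in place; the matching with the Thom equivalence is the substantive step.

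For the $L_\infty$ property, $\chi$ is presented as a formal power series in $\eta$ centred at $\partial^2 W$, so by \S\ref{linf} it suffices to verify that $\chi$ preserves Maurer-Cartan solutions formally across Artin-local parameter rings. The proof of Proposition~\ref{Gamma} does precisely this for any MC datum $\Phi = \partial^2 W + \eta$, not only those of the form $W+\varphi$: the identity $\bar\partial\Phi'(t)+\{\Phi(t),\Phi'(t)\}\equiv 0$ is a formal consequence of the MC equation for $\Phi$, and setting $t=t_c$ yields the MC equation for $\Phi_c$. For the linearisation, differentiating $\Phi\mapsto\Phi(t_c)$ along a perturbation $\delta$ gives simply $\delta|_C$, because the chain-rule contribution $\Phi'(t_c)\cdot\partial_\varepsilon t^\varepsilon_c$ vanishes at the critical point; this is the restriction-to-$C$ map of Proposition~\ref{mc}, and that proposition asserts a quasi-equivalence, so $\chi$ is indeed an $L_\infty$ quasi-isomorphism.

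For the match with the Thom equivalence, I would compare $\chi$ with the $L_\infty$ quasi-isomorphism $\chi^T$ on polyvector fields obtained by transporting the Thom Morita equivalence of Proposition~\ref{factorcliff} across Kontsevich-Tamarkin formality. Both $\chi$ and $\chi^T$ are $L_\infty$ quasi-isomorphisms between the same pair of DG Lie algebras; appealing to the rigidity built into formality --- namely that such $L_\infty$ quasi-isomorphisms of polyvector-field complexes are pinned down up to homotopy by their linear term at the base point $\partial^2 W$ --- one is reduced to comparing linear terms. For $\chi$, the previous paragraph identifies this linear term as restriction to the zero-section. For $\chi^T$, the relevant calculation is the infinitesimal Hochschild action of the Atiyah-Bott-Shapiro Thom bimodule; locally on $Y$ (trivialising $N$ via the sheaf property of Remark~\ref{ltg}), this bimodule reduces to the Clifford-Koszul bimodule of Kn\"orrer periodicity, whose linearised action on polyvectors is precisely restriction to the zero-section. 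The main obstacle is globalising the local match while carrying along the Brauer twist $\tau$: one must check that the patching cocycles of Spin-modules do not perturb the polyvector-field restriction map --- intuitively they should not, as $\tau$ acts only on the module side of the Morita bimodule and so should be transparent to the Hochschild-polyvector comparison, but this requires a careful check using the explicit Dolbeault models of \eqref{DSmanifolds}. Once the linear terms agree globally, rigidity yields $\chi=\chi^T$ up to $L_\infty$ homotopy, which completes Theorem~3 and, via the interpretation of the $\Phi_c$-deformation of $\mathrm{DS}Coh^\tau(Y)$ as corresponding under $\chi^T$ to the $\Phi$-deformation of $\mathrm{MF}(N,\partial^2W)$, also Theorem~2.
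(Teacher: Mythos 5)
The proposal diverges from the paper on both the first and third claims, and on both counts there are genuine gaps.

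On the $L_\infty$ property, your reduction — ``it suffices to verify that $\chi$ preserves Maurer-Cartan solutions formally across Artin-local parameter rings,'' which you then obtain by rerunning the proof of Proposition~\ref{Gamma} at an arbitrary MC element $\Phi$ — is logically insufficient. Preserving the \emph{zero-locus} of the structural vector field is strictly weaker than the intertwining relation $d\chi\circ B = B'\circ\chi$ that the definition in \S\ref{linf} demands (a map can preserve a zero-locus without preserving the vector field off that locus). The paper's proof does something stronger: it evaluates the vector field $B$ at a \emph{general} even point $\eta = \varphi + \psi\,\partial/\partial t$ over a super-commutative parameter ring and computes $d\chi(B(\eta))$ directly, watching the $\psi$-terms cancel by criticality of $t_c$ (this is Equation~\eqref{dcrit}) before invoking~\eqref{criteq}. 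That is the $L_\infty$-map condition, checked on the nose; your argument only applies on the MC subscheme. Your treatment of the linearisation claim, on the other hand, is correct and matches the paper.

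On the matching with the Thom equivalence, your route is genuinely different from the paper's and has a serious unproved hypothesis at its core. You propose to compare $\chi$ with the map $\chi^T$ carried by the Thom bimodule under formality and then appeal to ``rigidity built into formality'' to conclude from agreement of linear terms. But $L_\infty$ quasi-isomorphisms between polyvector-field complexes are \emph{not} determined up to homotopy by their linearisation: the difference of two such maps with the same linear term is an $L_\infty$ automorphism that linearises to the identity, and these form a nontrivial group in general (in the Kontsevich--Tamarkin story they are a torsor over something like the Grothendieck--Teichm\"uller group, not a point). You would need some extra structure (compatibility with the full Gerstenhaber/$E_2$ structure, a chosen associator, homogeneity constraints) to get the rigidity you invoke, and none is supplied. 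On top of that, the asserted identification of the linear term of $\chi^T$ with restriction to the zero-section, via the Kn\"orrer/Clifford--Koszul bimodule, is a substantial Hochschild computation that is sketched but not carried out; and the globalisation across the Brauer twist $\tau$ is flagged as needing ``a careful check'' and left open. The paper sidesteps all of this with a degeneration to the normal cone: scale the fibres of $N$ by $\xi$ and $W$ by $\xi^{-2}$, observe that $\Phi_\xi\to\partial^2W$ and that $d\chi$ carries $d\Phi_\xi/d\xi$ to $d\Phi_{c,\xi}/d\xi$ by criticality, obtain the $\xi$-formal identification of deformations starting from the Thom equivalence at $\xi=0$, and then argue that $\xi=1$ is permitted because the scaling is absorbed into the $2$-periodicity isomorphism on the $\mathrm{MF}$ side and into the homogeneity of $\Phi_c$ on the $\mathrm{DS}Coh$ side. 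That degeneration argument both avoids the rigidity issue entirely and handles the twist $\tau$ for free, since $\tau$ is locally constant and untouched by the scaling.
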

\noindent
The proof has three sections.

\begin{proof}[$L_\infty$ property.]
In a holomorphic local frame of $N$ with Morse coordinates $\{t_i\}$, we project one coordinate 
at a time, reducing the verification to a single $t$. At a point $\eta(t)= \varphi(t) + \psi(t)\cdot
\partial/\partial t$, 
\begin{equation}\label{firstvar}
d\chi: (\delta\varphi, \delta\psi) \mapsto \left(t_c+\varphi'(t_c)\right)\cdot\delta t_c +\delta\varphi(t_c) = 
\delta\varphi(t_c),
\end{equation}
having called $t_c$ the critical point of $\Phi(t):=\frac{1}{2}t^2 + \varphi(t)$ and $\delta t_c$ 
its first variation.

At a general \emph{even} point $\eta(t)= \varphi(t) + \psi(t)\cdot\partial/\partial t$ (valued in 
a super-commutative algebra $\bC[\varepsilon_i]$, so that the coefficients in $\varphi$ are even 
and those in $\psi$ odd), the value of the structural vector field 
is 
\[
\bar{\partial}\varphi(t) + \bar{\partial}\psi(t)\cdot\partial/\partial t - t\psi(t) + 
	\frac{1}{2}[\varphi,\varphi](t) + \psi(t)\psi'(t)\partial/\partial t - \varphi'(t)\psi(t) 
	+[\varphi,\psi](t)\partial/\partial t;
\]
applying $d\chi$ and using criticality of $t_c$ gives
\begin{equation}\label{dcrit}
(\bar{\partial}\varphi)(t_c) + \frac{1}{2}[\varphi,\varphi](t_c) - 
	\left(t_c+\varphi'(t_c)\right)\cdot\psi(t_c) = (\bar{\partial}\varphi)(t_c) + \frac{1}{2}[\varphi,\varphi](t_c).
\end{equation}
The right side agrees with $\bar{\partial}\Phi(t) + \frac{1}{2}
\{\Phi(t),\Phi(t)\}$ evaluated at $t=t_c$, and formula~\eqref{criteq} leads us to $\bar{\partial}
\Phi_c + \frac{1}{2}\{\Phi_c,\Phi_c\}$, which is the value 
of the structural vector field at $\Phi_c$.
\end{proof}

\begin{proof}[Linearisation]
This is Formula~\eqref{firstvar}, and quasi-isomorphy follows from Proposition~\ref{mc}.
\end{proof}

\begin{proof}[Matching deformed categories.]
We now identify the categories $\mathrm{MF}(X;W)$ and $\mathrm{DS}Coh^\tau(Y;\Phi_c)$ by tracking 
their degeneration to the normal cone, which scales the vertical directions of $N$ by a parameter 
$\xi$ and simultaneously scales $W$ by $\xi^{-2}$. At $\xi=0$, we start with the Thom isomorphism 
for $(N,\partial^2W)$; the singularity in $\Phi_\xi := \xi^{-2}W(\xi n) + 
\xi^*\varphi$ is removable and $\Phi_\xi\to\partial^2W$, as the vanishing assumptions on $\varphi$ 
in \S\ref{xpresent} overcome the $\xi^{-1}$ scaling of vertical cotangent vectors. 

Denote by $c_\xi$ and $\Phi_{c,\xi}:=\Phi_\xi(c_\xi)$ the critical points and values (after 
$L(\nu)$-restriction). Repeating our construcion of $\chi$ for the total deformation family 
over $Y[\![\xi]\!]$, the linearisation property shows\footnote{Explicitly: using dots 
for $d/d\xi$, we find $\dot{\Phi}_{c,\xi} = \dot{\Phi}_\xi(c_\xi) + \dot{c_\xi} \lrcorner\, 
d\Phi_\xi(c_\xi) = \dot{\Phi}_\xi(c_\xi) = d\chi\big(\dot{\Phi}_\xi\big)$, by criticality of $c_\xi$.} 
that $d\chi$ takes the deformation tangent 
vector $d\eta/d\xi$ to $d\Phi_{c,\xi}/d\xi$. Categorical equivalence follows for the $\xi$-formal 
deformations, and we must just explain why we can set $\xi=1$. 

The reason is that our $\mathrm{MF}$ category over $\bC(\!(\xi)\!)$ arises by extension of scalars 
from the original $2$-periodic one, extension which scales the $2$-periodicity isomorphism 
$\Sigma^2\simeq\mathrm{Id}$ by $\xi^2$. Indeed, ignoring the vertical 
change of variables (which preserve the sheaf of categories over $Y$), we are scaling the 
super-potential. On the side of $\mathrm{DS}Coh^\tau(Y;\Phi_c)$, the deformation 
is already formal by reason of high  Hochschild degree: implicit in its interpretation is 
the MC path which scales the degree $d$ term in $\Phi_c$ by $\xi^{d-2}$.  This is indeed what 
the degeneration $\Phi_{c,\xi}$ does, since the critical value computation is homogeneous for 
the corresponding scaling on Dolbeault forms.      
\end{proof}

\begin{remark}
The argument used some special properties of $\varphi$, and the reader may ask if Theorem~3 has been 
proved in full generality. For instance, higher poly-vector terms in $\eta$ would cause a problem 
in the geometric degeneration written. The answer is that the statement concerns formal deformations, 
which must come equipped with a Maurer-Cartan path originating at $(N,\partial^2W)$; the argument 
applies to those.
\end{remark}

\begin{remark}[Tech Note]
One may try to avoid setting $\xi=1$ by simply incorporating $\xi$ in the definition 
of the deformed categories, but that is not quite right. Realising $\mathrm{MF}$ as a formal 
deformation of a category of coherent sheaves requires $\xi\to\infty$, whereas $\Phi_c$ is formal 
near $\xi=0$, making comparison of the two problematic without the $2$-periodic interpolation.     
\end{remark}

\section*{Appendix: Review of formal deformations}
\stepcounter{section}
I now offer a construction of the formal deformation $C[\![t]\!]$ of a DG category with 
respect to a Maurer-Cartan path $\gamma(t)$ of Hochschild co-chains which may contain a curving. No claim of novelty applies, 
nor is a mathematical problem solved, as there is no intrinsic difficulty within curved deformation 
theory. The issue is rather to agree on the nature of a formal deformation. The discourse being 
logically unnecessary for this paper (cf.~\S\ref{formdef}), I do not give full details.

\subsection{Curved deformation problem.} 
Deforming an algebra deforms its category of left modules. The regular module always follows 
along, and has the distinguished property of generating the category. This example is the source of 
the comforting feeling that deformations are minor changes. Yet, absent the assumption of a surviving 
generating object, formal deformations can be quite brutal, making the theory look contentious. The 
trouble is that no particular object of a category is guaranteed to survive formal deformation 
of the latter --- and indeed, most objects do not, even in the case of 
modules over an algebra. A geometric example is a generic deformation of a projective $K3$ surface, 
which loses almost all coherent sheaves, already to first order. 

To first order, one encounters the \emph{Atiyah obstruction}. 
For a vector bundle $E\to X$ on a complex manifold, this is the contraction into $H^2\left(X;\mathrm{End}
(E)\right)$ of the Atiyah and Kodaira-Spencer classes. 
For a general $x\in C$, it is the canonical image in $\mathrm{Ext}^2(x,x)$ of the $HH^2$-deformation class. 
The latter can be defined at co-cycle level; should the obstruction vanish, a first deformation of $x$ is given 
by an element in $\mathrm{Hom}^1(x,x)$ killing the Atiyah co-cycle. For $E\to X$, this is a 
connection $(0,1)$-form on $E$ correcting the $\bar\partial$-operator, to ensure that it squares to zero. 
More interestingly, in the ($2$-periodic) case of a deformation of $X$ by a super-potential $\gamma(t)=tW$, 
a complex $[d_0:E^0\to E^1]$ survives if we can find $d^1:E^1\to E^0$ with $[d_0,d_1] = tW\cdot\mathrm{Id}$. 
(This example has no higher obstructions, and accounts for Orlov's construction of the 
matrix factorisation category, after setting $t=1$.)

For nilpotent Atiyah obstructions, one can hope to filter $x$ with a composition series of vanishing 
obstructions, and conclude that it lies in the thick closure of the unobstructed objects. 
(This is discussed in detail in \cite{kl}.) In common situations --- deformations of a ring, or a ringed 
space --- all modules have this property; 
but this is quite special. Thus, in the super-potential case, we can only recover sheaves 
supported in a finite neighborhood of $W^{-1}(0)$. This is a feature of curved life. 

\subsection{Curved formal deformations: torsion objects.}\label{torsionC}
To prevent the construction of $C[\![t]\!]$ being obscured by technicalties, we first build the 
full subcategory of $t$-torsion objects (supported on a finite neighborhood of $t=0$). 
The Maurer-Cartan path $\gamma$ defines, via its $L_\infty$ interpretation of \S\ref{linf}, 
an action on $C$ of the ``odd one-parameter group" with Lie algebra $\fra^1$. (The group is in reality 
the graded commutative and co-commutative (Hopf) algebra $U\fra^1:=\bC[\vep]$, $\deg\vep=1$.) 
This action leads to the crossed  product category $C_\vep :=U\fra^1\ltimes C$. It has the same objects 
as $C$; the morphism spaces are tensored with $U\fra^1$, but the obvious ``extension by scalars" 
composition rule (which we would find for $\gamma\equiv0$) is $A_\infty$-deformed by the path $\gamma(t)$, 
which imposes non-trivial higher commutators of $\vep$ with the morphisms of $C$. Specifically, 
the substitution $t\mapsto\partial/\partial\vep$ converts $\gamma$ to a Maurer-Cartan element 
for $C\otimes U\fra^1$, with respect to the quasi-isomorphism of Hochschild co-chain complexes 
\[
HCH^*(C\otimes U\fra^1) \simeq HCH^*(C) \otimes HCH^*(U\fra^1) \simeq HCH^*(C)\otimes U\fra^1[\partial/\partial\vep].
\]
The virtue of this converted deformation class 
is the absence of a curving, even when $\gamma$ carries one; so in describing $U\fra^1\ltimes C$, we 
are reduced to the case of curving-free formal deformations. 

Our $C_\vep$ is the full subcategory of objects in the formal deformation $C[\![t]\!]$ of $C$ which are 
supported at $t=0$; $\vep$ is the generator of $\mathrm{Ext}^1_{\bC[\![t]\!]}(\bC,\bC)$, and the reader 
will recognise our use of the Koszul duality between $\bC[\vep]$ and $\bC[\![t]\!]$.  
An object $x$ may be deformed to first order in $t$ iff the class $\vep$ survives in $C_\vep$ 
to $\mathrm{Ext}^1(x;x)$, in which case the extension classified by a surviving representative describes 
that very deformation. The $t$-torsion part of $C[\![t]\!]$ is generated from $C_\vep$ under successive 
extensions.\footnote{This can be characterised abstractly as the quotient of $C$ by $\fra^1$.} Iterated 
extensions may be obstructed by higher $A_\infty$ powers of $\vep$ into $\mathrm{Ext}^2(x,x)$. 

\begin{example}
The purely curved deformation $\gamma(t)=tW$ leads to the differential $\vep\mapsto W$ (and no 
other deformation). This ``kills'' skyscraper sheaves at points where $W\neq 0$: their endomorphism 
rings are now quasi-isomorphic to zero. A sheaf $\cE$ survives to first-order
iff $W=0$ on $\cE$.

A Kodaira-Spencer co-cycle $t\cdot\tau\in \Omega^{0,1}(X;\cT_X)$ square-bracketing to zero leads 
to the Poisson tensor $\partial/\partial\vep\wedge\tau$. This induces a commutator of $\vep$ with 
the structure sheaf, and we get a complete description of $C_\vep$ in this case as modules 
over $\Omega^{(0,\bullet}(X)[\vep]$ with this commutation relation. On a general sheaf $\cE$, 
however, the leading deformation is a differential, taking $\vep$ to the Atiyah obstruction; 
$\vep$ survives to first order precisely when the latter vanishes (its co-cycle is exact). 
\end{example}

\subsection{The full deformation.}
In concrete cases, we can build $C[\![t]\!]$ itself from (inverse) limits of torsion objects. The abstract 
construction is a variant of the one in \S\ref{torsionC}, leading to the $n$th order extensions 
$C[\![t]\!]/t^n$. All objects in the latter are $t$-torsion, so those categories are generated by 
their respective $C_\vep$; and $C[\![t]\!]$ is their limit over $n$. 

To construct the truncated versions of $C_\vep$, we merely replace in the discussion $\fra^1$ 
by the system of $L_\infty$ truncated algebras $\fra^{1}_{<n}$ whose Chevalley complexes are 
$\bC[\![t]\!]/t^n$. A minimal presentation of $\fra^1_{<n}$ has two linear generators $\vep,\eta$ in degrees 
$1$ and $2$ and a single, $n$-ary bracket $[\vep,\vep,\dots,\vep]=\eta$. The universal enveloping algebra
$U\fra^1_{<n}$, which replaces $\bC[\vep]$, is the deformation of $\bC[\vep,\eta]$ by the $n$-ary multiplication 
$m_n(\vep,\dots,\vep)=\eta$. We have \cite{th}
\[
HH^*(U\fra^{1}_{<n})\simeq H^*\left(C[\vep,\eta, t,u], d\right), \text{ with } du= t^n, d\vep= n\eta t^{n-1}, 
\] 
having written $t$ for $\partial/\partial\vep$ and $u$ for $\partial/\partial\eta$, in Hochschild degrees 
$0$ and $(-1)$, respectively. In degree $0$, we find as hoped $C[\![t]\!]/t^n$; this converts (the $n$-truncation of) 
$\gamma$ to a deformation class for the crossed product category, which, as before, is free of a curving term.

\begin{remark}
The specialisation $C_0$ of $C[\![t]\!]$ at $t=0$ need not agree with $C$ in general; it is a localised 
version of $C$, at the subcategory of objects with zero curving. This is established in \cite{kl} for 
the first-order deformation of DG categories of modules over a DG algebra of finite homological dimension. 
Formulating and proving a precise general statement is well beyond the scope of this note.
\end{remark}

\begin{remark}[Topological interpretation]
This perspective is closely related to the intepretation of curvings as $B\bZ$-actions of $C$, 
with matrix factorisations as Tate fixed-points \cite{p}. The fixed-point category $C[\![t]\!]$ 
of sections of $C$ over the classifying space $B^2\bZ =
\bC\bP^\infty$ can be built from the cellular approximations $\bC\bP^n$, with cohomology $C[\![t]\!]/t^n$. 
In that setting, $\deg t=2$, so relevant there are the variants $\fra^{-1}, \fra^{-1}_{<n}$ of 
our Lie algebras in degree $(-1)$, which are the (complexified) Lie homotopy models of those spaces. 
Concretely, a $\fra^{-1}$-action is the Lie algebra trivialisation of the trivial action of the circle group, 
descending it to a interesting $B\bZ$-action. (A one-parameter family of actions arises by scaling $t$.)  
Deformation theory as expounded here would correspond to the topological action of a $K(\bC,-1)$, 
a circle dimensionally shifted by $2$. The distinction is concealed in the 
$2$-periodic case.   

The fixed-point perspective clarifies why $C_0\sim C$ is an overoptimistic expectation. Recall that  
a compact group acting on a space $X$ will act topologically on the cohomology $H^*(X)$; the homotopy 
invariants in the latter form the equivariant cohomology $H^*_G(X)$. However, we can recover $H^*(X)\simeq H^*_G(X) 
\otimes^L_{H^*(BG)} \bZ$ only for nilpotent actions; this may fail on $\pi_0G$, and indeed the 
statement fails in general for finite groups. For actions of connected groups on categories, $\pi_1G$ can 
obstruct the statement as well.  
\end{remark}

\end{document}